\documentclass[12pt]{amsart}
\usepackage[margin=1.5in]{geometry}
\usepackage{amsthm,amsmath,amssymb}
\theoremstyle{plain}
\newtheorem{theorem}{Theorem}[section]
\newtheorem{lemma}[theorem]{Lemma}

\theoremstyle{definition}
\newtheorem{remark}[theorem]{Remark}
\numberwithin{equation}{section}
\newcommand*{\abs}[1]{\lvert#1\rvert}
\newcommand*{\norm}[1]{\lVert#1\rVert}
\newcommand*{\normop}[1]{\lVert#1\rVert_\text{op}}
\newcommand*{\normopU}[1]{\lVert#1\rVert_{U, \text{op}}}
\newcommand*{\defset}[2]{\{\,#1\,\mid\,#2\,\}}

\newcommand*{\CC}{\mathbb C}
\newcommand*{\RR}{\mathbb R}
\DeclareMathOperator{\trace}{trace}
\DeclareMathOperator{\id}{id}
\DeclareMathOperator{\Sym}{Sym}
\DeclareMathOperator{\GL}{GL}
\DeclareMathOperator{\End}{End}
\DeclareMathOperator{\U}{Isom}
\usepackage{xcolor} 
\usepackage{url}
\usepackage[T1]{fontenc}
\usepackage{lmodern}
\usepackage[
  backend=biber,
  style=numeric,
  doi=true,
  url=false,
  giveninits=true,
  isbn=false
]{biblatex}
\usepackage[colorlinks,linkcolor=blue,citecolor=purple]{hyperref}
\title{A note about Jordan's bound on the size of finite linear groups}
\author{Peter M\"uller}
\address{Institute of Mathematics, University of W\"urzburg}
\email{peter.mueller@uni-wuerzburg.de}
\begin{document}
\begin{abstract}
  In 1878 Camille Jordan showed that every finite subgroup
  $G\le\GL_n(\CC)$ has an abelian normal subgroup $A$ such that
  $\abs{G/A}$ is bounded in terms of $n$, but he did not give an
  explicit bound. An explicit bound was obtained by Blichfeldt in a
  series of papers beginning in 1904, using representation-theoretic
  methods. In 1911 Bieberbach gave a geometric proof, which is quite
  different from the approaches of Jordan and Blichfeldt, together
  with an explicit bound. Frobenius simplified this proof in the same
  year, and the resulting argument is still the simplest known. We
  present a self-contained and streamlined variant of Frobenius's
  argument, yielding the bound $\abs{G/A}\le25^{n^2}$.
\end{abstract}
\maketitle
\section{Introduction}
A classical topic in group theory is the study of finite subgroups of
$\GL_n(\CC)$. Fixing $n$, such groups can be arbitrarily large. For
instance, for every $m\ge1$ the group of scalar matrices
$\defset{\zeta I_n}{\zeta\in\CC, \zeta^m=1}$ has order $m$. However,
large finite subgroups of $\GL_n(\CC)$ have large abelian normal
subgroups by the following classical result of Camille Jordan:
\begin{theorem}[Jordan 1878]
  Let $G\le\GL_n(\CC)$ be a finite group. Then $G$ has an abelian
  normal subgroup $A$ such that $\abs{G/A}\le f(n)$, where $f(n)$
  depends only on $n$.
\end{theorem}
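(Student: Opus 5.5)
We follow the Bieberbach--Frobenius geometric approach. First we average a Hermitian inner product over $G$, which lets us assume $G\le U_n$, the unitary group, with operator norm $\normop{\cdot}$. The key quantity is the distance $\normop{g-I}$.

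The central commutator estimate is as follows. For unitary $a,b$, write $[a,b]=aba^{-1}b^{-1}$. Then
\[
[a,b]-I=\bigl((a-I)(b-I)-(b-I)(a-I)\bigr)a^{-1}b^{-1},
\]
so
\[
\normop{[a,b]-I}\le 2\normop{a-I}\normop{b-I}.
\]
Now let $N$ be the subgroup generated by all $g\in G$ with $\normop{g-I}<r$, for a suitable constant $r$ (say $r=1/2$). $N$ is normal in $G$, because conjugation by unitaries preserves the norm.

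We next show $N$ is abelian. The step I expect to be the main obstacle is this.

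1. Take $a,b$ in the generating set with $\normop{a-I},\normop{b-I}<1/2$.
2. Iterate commutators: $c_0=b$, $c_{k+1}=[a,c_k]$. Then $\normop{c_k-I}\le (2\normop{a-I})^k\normop{b-I}\to 0$.
3. Since $G$ is finite, some nontrivial element would have minimal distance $\delta>0$ from $I$. Hence $c_k=I$ for large $k$.
4. Run a downward induction to show that $a$ commutes with $b$. Suppose $c_{k+1}=I$, so $a$ commutes with $c_k=[a,c_{k-1}]$.
5. Use Frobenius's lemma: if $a$ commutes with $[a,c]$ and $\normop{a-I}<1/2$, then $[a,c]=I$. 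To prove it, diagonalize $a$. Then $c a c^{-1}$ has the same eigenvalues as $a$, all lying in an open arc of angle less than $\pi/3$ around $1$. The equation $aca^{-1}c^{-1}$ commuting with $a$ means $cac^{-1}$ commutes with $a$ and equals $a\cdot z$ with $z=[a,c]^{-1}$. Hence $z$ has eigenvalues that are ratios of eigenvalues of commuting unitaries with spectra in that arc. Its determinant is $1$, and its eigenvalues are then forced to be $1$ by an eigenvalue-matching (rearrangement) argument on the arc. So $z=I$.
6. This gives $[a,b]=I$, so the generators pairwise commute and $N$ is abelian.

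Getting the constants right in step 5 is the delicate part. I would try to replace it by a direct argument: the eigenvalues of $cac^{-1}$ are those of $a$, and commuting unitaries are simultaneously diagonalizable. Therefore $cac^{-1}$ permutes the eigenspace data of $a$. The norm bound forces $cac^{-1}=a$ once eigenvalue clusters are separated by more than $2\normop{a-I}$.

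Finally we bound $\abs{G/N}$ by a packing argument. If $gN\ne hN$ then $g^{-1}h\notin N$, so $\normop{g-h}=\normop{g^{-1}h-I}\ge r$. Hence coset representatives give points of $U_n\subset \End(\CC^n)\cong\RR^{2n^2}$ that are $r$-separated in operator norm, and all of them lie in the unit ball. The balls of radius $r/2$ around these points are disjoint and contained in the ball of radius $1+r/2$. Comparing volumes in the $2n^2$-dimensional real space gives
\[
\abs{G/N}\le\left(\frac{1+r/2}{r/2}\right)^{2n^2}.
\]
With $r=1/2$ this is $5^{2n^2}=25^{n^2}$, matching the paper's bound. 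We take $A=N$.
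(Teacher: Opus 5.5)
Your overall architecture (invariant inner product, the estimate $\normop{[a,b]-I}\le2\normop{a-I}\normop{b-I}$, iterated commutators $c_k\to I$, downward induction, and the packing bound $25^{n^2}$) is the classical Frobenius route and is fine. The gap is step 5, which carries all the weight: the lemma there is false as stated, and the sketched proof cannot work. Your lemma puts the closeness hypothesis on the wrong element. Take a root of unity $\zeta\ne1$ close to $1$, and let $a=\mathrm{diag}(1,\zeta)$ and $c=\begin{pmatrix}0&1\\1&0\end{pmatrix}$. Then $cac^{-1}=\mathrm{diag}(\zeta,1)$ commutes with $a$. Also $[a,c]=\mathrm{diag}(\zeta^{-1},\zeta)\ne I$ commutes with $a$, has determinant $1$, and all spectra involved lie in a tiny arc around $1$. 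Yet $\normop{a-I}=\abs{1-\zeta}$ is as small as you like. So no eigenvalue-matching argument that uses only the spectrum of $a$ can force $z=I$: in a common eigenbasis, $cac^{-1}$ may be a nontrivial permutation of the diagonal of $a$. Your fallback has the same defect. All eigenvalues of $a$ lie within $\normop{a-I}$ of $1$, so two distinct ones are never more than $2\normop{a-I}$ apart. Hence your separation condition holds only when $a$ is scalar, and in general the eigenvalues of $a$ can be arbitrarily close together.

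The downward induction survives, because you also know $\normop{c_{k-1}-I}<1/2$; the lemma has to use this. Put $c=c_{k-1}$ and $w=[a,c]$, which commutes with $a$, so $aca^{-1}=wc$. Suppose $\gamma\ne1$ is an eigenvalue of $w$, with orthogonal spectral projection $P$. Since $P$ is a polynomial in $w$, we have $Pa=aP$. Taking $\trace(P\,\cdot\,)$ of $aca^{-1}=wc$ gives $\trace(Pc)=\gamma\trace(Pc)$, so the compression of $c$ to $PV$ has trace $0$. But $\normop{\id_{PV}-Pc|_{PV}}\le\normop{I-c}<1$, contradicting Lemma~\ref{l:xy=yx}(a). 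Hence $w=I$. Alternatively, average $a^jca^{-j}=w^jc$ over $j$ and use that $c+c^*$ is positive definite. The paper avoids this lemma altogether. It picks a non-scalar $z\in\langle x,y\rangle$ with $\normop{\id_V-z}$ minimal, so $[x,z]$ is scalar and hence trivial by the trace lemma. It then inducts on $\dim V$ through the eigenspaces of $z$.
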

The statement of this theorem appears on page 91, and its proof is
given in Chapitre II of Jordan's long paper
\cite{Jordan:GLnC}. Jordan's proof, which is purely algebraic, does
not give an explicit bound $f(n)$. See
\cite{Breuillard:OriginalJordan} for a modern exposition of Jordan's
original argument, and many details about the history of Jordan's
theorem.

Beginning in 1904, several papers by Blichfeldt used representation
theoretic methods to obtain a different proof and the explicit bound
$f(n)=n!6^{(n-1)(\pi(n+1)+1)}$, where $\pi(n+1)$ is the number of
primes less than or equal to $n+1$. See \cite[\S30]{Dornhoff:A} for
this bound, or \cite[Theorem (14.12)]{Isaacs} and \cite[\S70]{Speiser}
for a slightly weaker bound, where $6$ is replaced by $12$.

Later in 1911 Bieberbach \cite{Bieberbach:Jordan} found a completely
different proof of Jordan's theorem, along with the bound
$f(n)=(1+32^4n^{10})^{2n^2}$. In the same year Frobenius
\cite{Frobenius:Jordan} simplified Bieberbach's proof and obtained the
better bound $f(n)=(\sqrt{8n}+1)^{2n^2}-(\sqrt{8n}-1)^{2n^2}$.

A modern account of Frobenius's method is Dixon's version \cite[Theorem
5.7]{Dixon:LinearGroups}, yielding the bound $f(n)=(49n)^{n^2}$ (which
is somewhat weaker than the previous bounds).

In \cite{Tao:Gromov} Tao sketched a variant of Frobenius' method. Full
details are given in \cite[Section 2]{BreuillardGreen:III}. The bound
$f(n)=n^{Cn^3}$ for some undetermined absolute constant $C$ is worse
than the previous bounds.

These proofs yield no information regarding the optimal bound
$f(n)$ for a general $n$. To date, Blichfeldt's bound is
asymptotically the best known one which can be proven without invoking
the classification of the finite simple groups. Note that if $c>6$,
then $f(n)=c^{n^2/\log n}$ works for all sufficiently large $n$ by the
prime number theorem.

In Jordan's theorem we necessarily have $f(n)\ge (n+1)!$ for $n\ge4$,
as can be seen as follows: Let the symmetric group $\Sym(n+1)$ act on
$\CC^{n+1}$ by permuting the coordinates. The $n$-dimensional subspace
which consists of those vectors whose coordinates sum up to $0$ is
invariant under $\Sym(n+1)$. This yields an embedding of $\Sym(n+1)$
into $\GL_n(\CC)$. Since $\Sym(n+1)$ does not have a nontrivial
abelian normal subgroup for $n\ge4$, it follows that $f(n)\ge (n+1)!$.

Conversely, using the classification of the finite simple groups,
Collins \cite{Collins:Jordan0} showed that Jordan's theorem in fact
holds with $f(n)=(n+1)!$ once $n\ge71$. He also determined the optimal
cases for all $n\le70$.

While more than 100 years old, even today Frobenius' proof is
considered to be the simplest approach to Jordan's theorem. His proof
(as well as Bieberbach's precursor) consists of three steps:
\begin{itemize}
\item[Step 1:] The unitary trick -- known already to Bieberbach and
  Frobenius -- allows us to assume that $G$ is a subgroup of the
  unitary group $\operatorname{U}_n(\CC)$.
\item[Step 2:] Using a suitable matrix norm $\norm{\cdot}$ on
  $\CC^{n\times n}$ and a sufficiently small positive constant $c$,
  one defines the subgroup $A$ which is generated by all $g\in G$ with
  $\norm{I_n-g}<c$ and shows that it is an abelian normal subgroup of
  $G$.
\item[Step 3:] A simple volume packing argument or alternatively the
  pigeon-hole principle shows that $\abs{G/A}$ is bounded from above
  in terms of $n$ and $c$ only.
\end{itemize}
The crucial piece for Step 2 is Lemma \ref{l:xy=yx} (or variants
thereof) below. Its proof by Frobenius or the one in the standard
references \cite[Proof of Th.\ 5.7, part (a)]{Dixon:LinearGroups} and
\cite[(36.17) Lemma]{CurtisReiner:Rep} is somewhat involved. Below we
provide a different proof which we believe is more natural.

Furthermore, we replace Step 1 by a weaker result which does not
require the spectral theorem, and which is sufficient for our
purposes.

We prove Jordan's theorem with the bound $f(n)=25^{n^2}$.
\section{Normed vector spaces and linear isometries}
Let $V$ be a finite-dimensional normed vector space over $\CC$. Let
$\norm{v}$ denote the norm of $v\in V$. Thus $\norm{\cdot}$ is a
function from $V$ to $\RR$ such that for each $v, w\in V$ and
$\lambda\in\CC$ the following holds:
\begin{align*}
  \norm{v}&\ge0,\\
  \norm{v}&=0\text{ if and only if }v=0,\\
  \norm{\lambda v}&=\abs{\lambda}\cdot\norm{v},\\
  \norm{v+w}&\le\norm{v}+\norm{w}.                           
\end{align*}
A linear isometry of $V$ is a map $x\in\GL(V)$ such that
$\norm{xv}=\norm{v}$ for all $v\in V$. Let $\U(V)$ denote the group of
linear isometries of $V$.
\begin{lemma}\label{l:uv}
  Let $V$ be a finite-dimensional complex vector space and
  $G\le\GL(V)$ be a finite group. Then $V$ can be equipped with a norm
  such that $G\le \U(V)$, that is, $G$ acts by linear isometries on
  $V$.
\end{lemma}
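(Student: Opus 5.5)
The plan is the averaging trick, using only the norm axioms and no inner products or spectral theorem. First fix any norm $\norm{\cdot}_0$ on $V$. For instance, choose a basis $e_1,\dots,e_n$ of $V$ and set $\norm{\sum_i\lambda_ie_i}_0=\sum_i\abs{\lambda_i}$; checking the four norm axioms for this is routine. Then define
\[
  \norm{v}=\sum_{g\in G}\norm{gv}_0 \qquad (v\in V).
\]
The sum is finite because $G$ is finite. (Taking $\max_{g\in G}\norm{gv}_0$ instead would work equally well.)

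Next we check the norm axioms for $\norm{\cdot}$, term by term, using linearity of each $g$:
\begin{itemize}
\item Nonnegativity is clear, since every summand is nonnegative.
\item Homogeneity follows from $\norm{g(\lambda v)}_0=\abs{\lambda}\norm{gv}_0$.
\item The triangle inequality follows from $\norm{g(v+w)}_0\le\norm{gv}_0+\norm{gw}_0$, summed over $g\in G$.
\item Definiteness: if $\norm{v}=0$, then every summand vanishes. In particular the summand for $g=1$ gives $\norm{v}_0=0$, so $v=0$.
\end{itemize}
Only the definiteness step uses that the identity lies in $G$, or more generally that each $g$ is injective.

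Finally, we show that each $h\in G$ is an isometry. We have
\[
  \norm{hv}=\sum_{g\in G}\norm{ghv}_0=\sum_{g'\in G}\norm{g'v}_0=\norm{v},
\]
because $g\mapsto gh$ is a bijection of $G$. Since $h\in\GL(V)$ already, it follows that $h\in\U(V)$, so $G\le\U(V)$.

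There is no real obstacle here. The only point needing care is to exhibit some initial norm on $V$ concretely, without appealing to anything beyond the definitions, and the $\ell^1$-norm with respect to a basis does this.
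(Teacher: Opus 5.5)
Your proof is correct and is essentially the paper's argument: average an arbitrary norm over $G$ via $\norm{v}=\sum_{g\in G}\norm{gv}_0$, and use that $g\mapsto gh$ permutes $G$ to get invariance. You also verify the norm axioms and exhibit a concrete starting norm, both of which the paper leaves implicit.
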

\begin{proof}
  Let $\norm{\cdot}'$ be an arbitrary norm on $V$. Set
  \[
    \norm{v}=\sum_{h\in G}\norm{hv}'
  \]
  for $v\in V$. Then $\norm{\cdot}$ is a norm on $V$ and
  \[
    \norm{gv}=\sum_{h\in G}\norm{h(gv)}'=
           \sum_{h\in G}\norm{(hg)v}'=\norm{v},
  \]
  because if $h$ runs through $G$, then so does $hg$.
\end{proof}
We recall the definition of the operator norm of $a\in\End(V)$. Since
$V$ is finite-dimensional, the set $\defset{v\in V}{\norm{v}=1}$ is
compact. Hence the continuous function $V\to\RR$, $v\mapsto\norm{av}$
assumes a maximum on this set. Accordingly, the operator norm of $a$
is defined as
$\normop{a}=\max\defset{\norm{av}}{\norm{v}=1}$. Immediate
consequences of this definition are
\begin{align*}
  \norm{av}&\le\normop{a}\cdot\norm{v},\\
  \normop{\lambda a}&=\abs{\lambda}\cdot\normop{a},\\
  \normop{a+b}&\le\normop{a}+\normop{b},\\
  \normop{ab}&\le\normop{a}\cdot\normop{b}
\end{align*}              
for all $a, b\in\End(V)$, $v\in V$, and $\lambda\in\CC$.

We note three more immediate consequences of the definition of the
operator norm:
\begin{align*}
  \normop{xay} &= \normop{a} \text{ for all $a\in\End(V)$ and $x,
                 y\in\U(V)$},\\
  \normop{x} &= 1 \text{ for all $x\in\U(V)$},\\
  \abs{\mu} &\le \normop{a} \text{ for each eigenvalue $\mu$ of
              $a\in\End(V)$}.
\end{align*}

\section{Commuting elements}
The key step in the proof of Jordan's theorem is the following lemma,
which shows that elements in a finite subgroup of $\U(V)$ sufficiently
close to the identity commute.
\begin{lemma}\label{l:xy=yx}
  Let $V$ be a finite-dimensional complex vector space with norm
  $\norm{\cdot}$ and $\id_V$ be the identity map on $V$. Then the
  following holds:
  \begin{itemize}
  \item[(a)] If $\normop{\id_V-x}<1$ for $x\in\End(V)$, then $\trace
    x\ne0$.
  \item[(b)] Suppose that $x,y\in\U(V)$ generate a finite group. If
    $\normop{\id_V-x}<1/2$ and $\normop{\id_V-y}<1/2$, then $xy=yx$.
  \end{itemize}
\end{lemma}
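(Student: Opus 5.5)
For (a) I would argue through the eigenvalues. Let $\mu$ be an eigenvalue of $x$. Then $1-\mu$ is an eigenvalue of $\id_V-x$, so the last property of the operator norm in Section 2 gives $\abs{1-\mu}\le\normop{\id_V-x}<1$. Every eigenvalue of $x$ therefore lies in the open disk of radius $1$ about $1$, and so has positive real part. Since $\trace x$ is the sum of the eigenvalues counted with multiplicity, $\operatorname{Re}\trace x>0$, and in particular $\trace x\ne0$.

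For (b) I would use a shrinking-commutator argument. Write $x=\id_V+a$ and $z=\id_V+b$ with $x,z\in\U(V)$. Then
\[
xz-zx=ab-ba,\qquad xzx^{-1}z^{-1}-\id_V=(ab-ba)x^{-1}z^{-1}.
\]
Since $x,z$ are isometries, the invariance property $\normop{xay}=\normop{a}$ gives
\[
\normop{\id_V-[x,z]}\le 2\normop{a}\normop{b}<\normop{b}\qquad\text{whenever }\normop{a}<1/2.
\]
Set $z_0=y$ and $z_{k+1}=[x,z_k]$. All $z_k$ lie in the finite group $\langle x,y\rangle$, and $\normop{\id_V-z_k}$ is strictly decreasing as long as it is nonzero. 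Finitely many values cannot decrease forever, so some $z_k=\id_V$. Taking the least such $k$, and assuming $k\ge1$ for a contradiction, put $w=z_{k-1}$ and $u=[x,w]=z_k$. If $k\ge2$, then $u=\id_V$ contradicts the minimality of $k$. So the real content is to exclude the case where $u=[x,w]\ne\id_V$ but $u$ commutes with $x$, with $w$ (and $y$) satisfying $\normop{\id_V-w}<1/2$. Ruling this out for every such $w$ would handle all $k$ at once and yield $xy=yx$.

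So suppose $u$ commutes with $x$. From $xwx^{-1}w^{-1}=u$ we get $wxw^{-1}=u^{-1}x$, so $x$ and $u^{-1}x$ are conjugate and have the same characteristic polynomial. The elements $x$ and $u$ have finite order, so they are diagonalizable (their minimal polynomials divide some $t^m-1$). Because they commute, $x$ preserves each eigenspace $V_\zeta$ of $u$, where $\zeta$ is a root of unity. The norm restricts to each $V_\zeta$, and the restriction of $x$ is still within operator distance $<1/2$ of the identity there. Hence, as in (a), every eigenvalue $\mu$ of $x$ lies on the unit circle with $\abs{1-\mu}<1/2$, that is, with argument in a small arc around $0$ of half-width less than $\pi/6$.

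The eigenvalues of $u^{-1}x$ are the numbers $\zeta^{-1}\mu$, where $\mu$ runs over the eigenvalues of $x$ on $V_\zeta$. Equality of the two eigenvalue multisets should force every $\zeta$ to equal $1$, giving $u=\id_V$. My first attempt is an extremal-argument count: compare the sums (or extreme values) of arguments on both sides, which all lie in a small arc, and use (a) on suitable restrictions, where $\trace(x|_{V_\zeta})\ne0$, to control which $\zeta$ can occur. The main obstacle is this last step, showing that $x$ conjugate to $u^{-1}x$ with $u$ commuting forces $u=\id_V$. The determinant only gives $\prod\zeta^{\dim V_\zeta}=1$, so I expect to need (a) to close the gap. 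If it does not, I would bound $\normop{\id_V-u}<1$ directly and apply (a) to $u$ and its powers, averaging the trace identity $\trace(x^j)=\trace(u^{-j}x^j)$ over $j$.
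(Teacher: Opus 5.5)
Part (a) is correct and is essentially the paper's argument. The reduction in (b) via $z_{k+1}=[x,z_k]$ is sound in substance, though the indexing is off. If $k\ge2$ is least with $z_k=\id_V$, you should take $w=z_{k-2}$ and $u=z_{k-1}$, so that $u=[x,w]\ne\id_V$, $[x,u]=\id_V$ and $\normop{\id_V-w}<1/2$.

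The genuine gap is the final step, which carries all the content, and the tools you propose cannot close it. Eigenvalue multisets, extreme or summed arguments, (a) on the eigenspaces of $u$, and the identities $\trace(x^j)=\trace(u^{-j}x^j)$ only use the spectral data of the commuting pair $x,u$ together with the conjugacy $x\sim u^{-1}x$. That data is compatible with $u\ne\id_V$. Take $\omega=e^{2\pi i/60}$, $x=\operatorname{diag}(1,\omega)$ and $w$ the coordinate swap on $\CC^2$ with the standard Hermitian norm; these generate a finite group. Then $u=xwx^{-1}w^{-1}=\operatorname{diag}(\omega^{-1},\omega)$, and:
\begin{itemize}
\item[(i)] $u$ commutes with $x$ and has $\normop{\id_V-u}<1/2$;
\item[(ii)] $u^{-1}x=\operatorname{diag}(\omega,1)$ has the same spectrum as $x$, and all your trace identities hold;
\item[(iii)] $x$ has nonzero trace on each eigenspace of $u$.
\end{itemize}
The only hypothesis that fails is $\normop{\id_V-w}<1/2$. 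So the missing idea is to use the closeness of $w$ itself to $\id_V$, not merely its consequence for $u$.

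One way to do this is as follows. Let $m$ be the order of $x$, and let $\pi_\alpha=\frac1m\sum_{k=0}^{m-1}\alpha^{-k}x^k$ be the projection onto the $\alpha$-eigenspace $V_\alpha$ of $x$. It satisfies $\normop{\pi_\alpha}\le1$, being an average of isometries, and it commutes with $u$. Put $T_\alpha=\pi_\alpha w|_{V_\alpha}$. For $v\in V_\alpha$, applying $\pi_\alpha$ to $wxv=u^{-1}xwv$ gives $T_\alpha v=u^{-1}T_\alpha v$. Moreover, for $v\ne0$ in $V_\alpha$,
$\norm{v-T_\alpha v}=\norm{\pi_\alpha(v-wv)}\le\normop{\id_V-w}\norm{v}<\norm{v}$,
so $T_\alpha$ is invertible on $V_\alpha$. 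Hence $u$ is the identity on every $V_\alpha$, and $u=\id_V$. With this your route is complete, and it needs neither (a) nor induction on $\dim V$.

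The paper avoids this step altogether. It takes a non-scalar $z\in\langle x,y\rangle$ with $\normop{\id_V-z}$ minimal, so the shorter commutator $x^{-1}z^{-1}xz$ is forced to be scalar. The trace together with (a) makes that scalar $1$. Then $x$ and $y$ preserve the proper eigenspaces of $z$, and induction on $\dim V$ finishes the proof.
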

\begin{proof}
  \begin{itemize}
  \item[(a)] If $\lambda$ is an eigenvalue of $x$, then $1-\lambda$ is
    an eigenvalue of $\id_V-x$, hence
    $\abs{1-\lambda}\le\normop{\id_V-x}<1$.

    Let $\lambda_1,\dots,\lambda_n$ be the eigenvalues of $x$,
    where $n=\dim V\ge1$. Then
    \[
      n %
      > \sum\abs{1-\lambda_i}%
       \ge \abs{\sum(1-\lambda_i)}%
      =\abs{n-\trace x},
    \]
    hence $\trace x\ne0$.
  \item[(b)] We prove the claim by induction on $\dim V$. The base
    case $\dim V=1$ is immediate. Also, clearly $xy=yx$ if $x$ is a
    scalar map. Thus suppose that $x$ is not a scalar map. Let $z$ be
    a non-scalar element of the finite group generated by $x$ and $y$
    for which $\normop{\id_V-z}$ is minimal. From
  \begin{align*}
    \id_V-x^{-1}z^{-1}xz%
    &=x^{-1}z^{-1}(zx-xz)\\
    &=x^{-1}z^{-1}\left((\id_V-z)(\id_V-x)-(\id_V-x)(\id_V-z)\right)
  \end{align*}
  we obtain
  \begin{align*}
    \normop{\id_V-x^{-1}z^{-1}xz}%
    &\le \normop{(\id_V-z)(\id_V-x)-(\id_V-x)(\id_V-z)}\\
    &\le2 \cdot\normop{\id_V-x}\cdot\normop{\id_V-z}\\
    &< \normop{\id_V-z}.
  \end{align*}
  By the minimal choice of $z$ we see that $x^{-1}z^{-1}xz$ is a
  scalar map $\lambda\cdot\id_V$, hence
  \[
    z^{-1}xz=\lambda x
  \]
  for some $\lambda\in\CC$.
  
  Taking the trace gives $\lambda=1$ because $\trace x\ne0$ by (a).

  So $z$ commutes with $x$, and for the same reason $z$ commutes with
  $y$.

  As $z$ has finite order, $V$ is a direct sum of the eigenspaces of
  $z$. These eigenspaces are proper subspaces of $V$, because $z$ is
  not a scalar map. Moreover, as the elements $x$ and $y$ commute with
  $z$, they map each eigenspace of $z$ to itself. Thus in order to
  show that $x$ and $y$ commute, we need only show that the
  restrictions $x_U$ and $y_U$ of $x$ and $y$ to $U$ commute for each
  eigenspace $U$ of $z$.

  Thus the lemma follows by induction on $\dim V$ once we verify that
  $x_U$ and $y_U$ satisfy the assumptions of the lemma.

  Of course, $x_U$ and $y_U$ generate a finite group.

  Let $\norm{\cdot}_U$ be the restriction of $\norm{\cdot}$ to $U$,
  and ${\normopU{\cdot}}$ be the corresponding operator norm on
  $\End(U)$.

  Then $x_U, y_U\in\U(U)$ with respect to
  $\norm{\cdot}_U$. Furthermore, the definition of the operator norm
  shows that $\normopU{\id_U-a_U}\le\normop{\id_V-a}$ for each
  $a\in\End(V)$ which maps $U$ to itself. In particular,
  $\normopU{\id_U-x_U}<1/2$ and $\normopU{\id_U-y_U}<1/2$.

  Thus all the assumptions of the lemma are satisfied for $x_U$ and
  $y_U$, so $x_U$ and $y_U$ commute by the induction hypothesis.
\end{itemize}
\end{proof}
\section{The ball packing argument}

Set $V=\CC^n$ and let $G\le\GL(V)$ be a finite group. Via Lemma
\ref{l:uv} we equip $V$ with a norm such that $G\le\U(V)$. Let
$\normop{\cdot}$ be the corresponding operator norm on $\End(V)$.

We let
\[
  B(r)=\defset{a\in\End(V)}{\normop{a}<r}
\]
be the open ball of radius $r$ around the zero map in $\End(V)$ with
respect to the operator norm.

Set $M=\defset{x\in G}{\normop{\id_V-x}<1/2}$. As
\[
  \normop{\id_V-y^{-1}xy}%
=\normop{y^{-1}\cdot(\id_V-x)\cdot y}%
=\normop{\id_V-x}
\]
for all $x\in M$ and $y\in G$ we see that $M$ is invariant under
conjugation by elements of $G$. Furthermore, by Lemma \ref{l:xy=yx}
any two elements of $M$ commute. Thus the group $A$ generated by $M$
is an abelian normal subgroup of $G$.

Let $x_1,\dots,x_m$ be representatives of the left cosets of $A$ in
$G$.

We claim that the $m$ balls $x_i+B(1/4)$ are pairwise
disjoint. Suppose that there exists $y\in\End(V)$ contained in both
$x_i+B(1/4)$ and $x_j+B(1/4)$. Then
\[
  \normop{x_i-x_j}=\normop{(x_i-y)+(y-x_j)}\le\normop{x_i-y}+\normop{y-x_j}
  < 1/4+1/4 = 1/2,
\]
and therefore $\normop{x_i\cdot x_j^{-1}-\id_V}<1/2$, so
$x_i\cdot x_j^{-1}\in M\subseteq A$, thus $x_i=x_j$.

As $\normop{x_i}=1$, we obtain $x_i+B(1/4)\subseteq B(1+1/4)$ for all
$i$. Thus the ball $B(5/4)$ of radius $5/4$ contains $m$ disjoint
balls of radius $1/4$.

As a real vector space, $\End(V)$ is isomorphic to $\RR^{2n^2}$ and
therefore carries a Lebesgue measure $\lambda$. Since $B(1/4)$ is an
open and nonempty subset of $\End(V)$, it has positive Lebesgue
measure, say $\beta=\lambda(B(1/4))$. By the translation invariance of
the Lebesgue measure we have $\lambda(x_i+B(1/4))=\beta$. Furthermore,
since $B(5/4)=5B(1/4)$, the scaling property of Lebesgue measure
implies $\lambda(B(5/4))=5^{2n^2}\lambda(B(1/4))=25^{n^2}\beta$.

Thus the ball $B(5/4)$, which has Lebesgue measure $25^{n^2}\beta$,
contains $m$ pairwise disjoint balls of Lebesgue measure $\beta$.
Since $\beta>0$, it follows that $\abs{G/A}=m\le 25^{n^2}$, as
required.
\begin{remark}
  One actually gets a slightly better bound, because $\norm{x_i}=1$
  implies $x_i+B(1/4)\subseteq B(1+1/4)\setminus B(1-1/4)$ for all
  $i$, hence $\abs{G/A}=m\le 25^{n^2}-9^{n^2}$ by comparing volumes as
  above.
\end{remark}
\printbibliography
\end{document}